\newcommand{\Z}{{\mathbb Z}}
\newcommand{\Q}{{\mathbb Q}}
\newtheorem{thm}{Theorem}[section]
\newtheorem{theorem}[thm]{Theorem}
\newtheorem{lemma}[thm]{Lemma}	
\newtheorem{proposition}[thm]{Proposition}
\theoremstyle{definition}
\theoremstyle{remark}
\newtheorem*{lemma*}{Lemma}
\numberwithin{equation}{section}
\title{Heights of Divisors of $x^n-1$}
\author{Lola Thompson}
\address{Department of Mathematics\\ 
6188 Kemeny Hall\\
Dartmouth College\\
Hanover, NH 03755, USA}
\email[] {Lola.Thompson@Dartmouth.edu}
\begin{document}

\begin{abstract} 
\noindent The height of a polynomial with integer coefficients is the largest coefficient in absolute value. Many papers have been written on the subject of bounding heights of cyclotomic polynomials. One result, due to H. Maier, gives a best possible upper bound of $n^{\psi(n)}$ for almost all $n$, where $\psi(n)$ is any function that approaches infinity as $n \rightarrow \infty$. We will discuss the related problem of bounding the maximal height over all polynomial divisors of $x^n - 1$ and give an analogue of Maier's result in this scenario.  
\end{abstract}

\maketitle 

\begin{center}
\textit{For my adviser Carl Pomerance on his $65^{th}$ birthday}
\end{center} 

\section{Introduction and statement of the principal result}

Let $\Phi_n(x)$ denote the $n^{th}$ cyclotomic polynomial. The $n^{th}$ cyclotomic polynomial is the unique monic irreducible polynomial over $\Q$ with the primitive $n^{th}$ roots of unity as its roots. It has integer coefficients. The degree of $\Phi_n(x)$ is $\varphi(n)$, where $\varphi$ is the Euler totient function. 

We define the \textit{height} of a polynomial with integer coefficients to be the largest coefficient in absolute value. We will denote the height of a polynomial $f$ by $H(f)$.

Much has been studied about $H(\Phi_n)$, which shall henceforth be denoted $A(n)$. In 1946, P. Erd\H{o}s stated that $\log A(n) \leq n^{(1 + o(1)) \log 2/ \log \log n}$. He held back its proof because of how complicated it was. R. C. Vaughan showed in 1975 that this inequality can be reversed for infinitely many $n$.

In 1949, P.T. Bateman gave a simple argument that if $k$ is a given positive integer then $A(n) \leq n^{2^{k-1}}$ if $n$ has exactly $k$ distinct prime factors. Let $\omega(n)$ denote the number of distinct prime factors of $n$. By taking the log of both sides of Bateman's inequality and using the fact that the maximal order of $\omega(n)$ is $\frac{\log n}{\log \log n}$ \cite[p.355]{hardywright}, one can show that Bateman's result implies Erd\H{o}s' result. Bateman's upper bound was improved upon by Bateman, C. Pomerance and Vaughan \cite{bpv84} in 1981, who showed that $A(n) \leq n^{2^{k-1}/k-1}$. They also showed that $A(n) \geq n^{2^{k-1}/k-1}/(5 \log n)^{2^{k-1}}$ holds for infinitely many $n$ with exactly $k$ distinct odd prime factors.

Related to these problems are questions concerning the maximal height over all divisors of $x^n-1$. It is well-known that $x^n-1 = \displaystyle\prod_{d \mid n} \Phi_d(x)$. Thus, $x^n-1$ has $\tau(n)$ distinct irreducible divisors, where $\tau(n)$ is the number of divisors of $n$. Therefore, $x^n-1$ has $2^{\tau(n)}$ divisors in $\Z[x]$. 

Let $B(n) =$ max$\{H(f) : f(x) \mid x^n-1, f(x) \in \Z[x]\}$. In particular, $A(n) \leq B(n)$ since $\Phi_n(x)$ divides $x^n-1$ and $B(n)$ is the maximum height over all divisors of $x^n-1$. In general, much less is known about $B(n)$ than $A(n)$. In 2005, Pomerance and N. Ryan \cite{pr07} proved that as $n \rightarrow \infty$, $\log B(n) \leq n^{(\log 3 + o(1))/\log \log n}$. They also showed that this inequality can be reversed for infinitely many $n$.  

In \cite{maier96}, H. Maier found an upper bound for $A(n)$ that holds for most $n$. 

\begin{theorem}[Maier] Let $\psi(n)$ be a function defined for all positive integers such that $\psi(n) \rightarrow \infty$ as $n \rightarrow \infty$. Then $A(n) \leq n^{\psi(n)}$ for almost all $n$, i.e., for all $n$ except for a set with asymptotic density $0$.\end{theorem}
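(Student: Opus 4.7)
The conclusion is equivalent, by a diagonal argument, to the quantitative bounded-in-density statement: for every $\delta>0$ there exists a constant $M=M(\delta)$ such that
\[
\#\{n\leq x : A(n)>n^{M}\}<\delta x
\]
for all sufficiently large $x$. Given such a family $M(\delta)$, one defines $\psi$ to equal $M(1/k)$ on $(x_{k-1},x_{k}]$ for a sequence $x_{k}\to\infty$ growing rapidly enough; then $\psi\to\infty$ and $\{n:A(n)>n^{\psi(n)}\}$ has density zero. Conversely, failure of the quantitative form lets one cook up a $\psi\to\infty$ for which the exceptional set has positive upper density. So it suffices to show that $\log A(n)/\log n$ is bounded in density.

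My first reduction would be to squarefree $n$. Using $\Phi_{n}(x)=\Phi_{\mathrm{rad}(n)}(x^{n/\mathrm{rad}(n)})$ the height is preserved, so $A(n)=A(\mathrm{rad}(n))$; and since $\log\mathrm{rad}(n)=(1-o(1))\log n$ off a set of density zero, it is enough to prove the bound on squarefree integers. For squarefree $n$ the natural starting point is the factorisation
\[
\Phi_{n}(x)=\prod_{d\mid n}(x^{n/d}-1)^{\mu(d)}=\frac{N_{n}(x)}{D_{n}(x)},
\]
obtained by separating the divisors according to the sign of $\mu(d)$. Each of $N_{n}$ and $D_{n}$ is a product of at most $2^{\omega(n)-1}$ factors of the form $x^{m}-1$, so their heights are elementary to control; the real difficulty is to understand the cancellation that produces $\Phi_{n}$ as their quotient.

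The analytic crux --- and the main obstacle --- is a regularity statement about the divisors of a typical $n$: for almost all $n$ and every $u\in\R$,
\[
\#\{d\mid n:u\leq\log d<u+1\}\leq(\log n)^{o(1)}.
\]
Such non-clustering (in the spirit of the Erd\H{o}s--Ford--Tenenbaum study of the divisor function in short intervals) prevents the zeros of $N_{n}$ and $D_{n}$ from aligning pathologically on the unit circle, and so forces $\log|\Phi_{n}(e^{i\theta})|$ to be small outside a small exceptional angular set near the primitive $n$-th roots of unity. One then recovers a uniform bound on the coefficient $c_{j}$ of $x^{j}$ in $\Phi_{n}$ via Cauchy's formula on a circle of radius $1-1/\log n$, balancing the $L^{\infty}$ growth of $|\Phi_{n}|$ near its unit-circle zeros against the factor $r^{-j}$. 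I expect the combinatorial divisor-clustering lemma to demand the most work, since it requires a delicate higher-moment argument on the local behaviour of $\tau$; the passage from pointwise control of $\log|\Phi_{n}|$ to an $L^{\infty}$ bound on its coefficients is technical but essentially routine.
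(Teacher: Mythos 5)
Your opening reductions match the paper's structure: the diagonal argument reducing to a bounded-in-density statement is implicitly exactly what the proof of Proposition 2.1 establishes (for every $\varepsilon>0$ one gets $A_0(n)\leq n^{C_3(\varepsilon)}$ outside a set of density $\leq 2\varepsilon$, and $\varepsilon$ is then sent to $0$), and the reduction to squarefree $n$ via $A(n)=A(\mathrm{rad}(n))$ is the one used in Section 4. But your main engine is a genuinely different route from Maier's, and as written it has a real gap.

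Two concrete problems. First, the Cauchy step does not close: on $|z|=r$ with $r=1-1/\log n$, the factor $r^{-j}$ is of size $e^{j/\log n}$, and for the middle coefficients $j\asymp\varphi(n)$ this is $e^{c\,\varphi(n)/\log n}$, which is astronomically larger than any $n^{O(1)}$; you would need $\max_{|z|=r}|\Phi_n(z)|$ to be correspondingly tiny to compensate, and nothing in the sketch delivers that. (Choosing $r$ closer to $1$, say $1-1/\varphi(n)$, fixes $r^{-j}$ but pushes all the difficulty into bounding $|\Phi_n|$ essentially on the unit circle, where it is unbounded in $n$.) Second, the proposed key lemma is the statement that Hooley's $\Delta$-function is $(\log n)^{o(1)}$ on average — a deep result in its own right — and the claimed implication from divisor non-clustering to a pointwise bound on $\log|\Phi_n(e^{i\theta})|$ is not an argument: for a fixed $\theta$, whether $e^{i n\theta/d}$ is near $1$ for many $d\mid n$ is a Diophantine question about $\theta$, and having few divisors in each dyadic window does not control it.

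The actual proof (Maier's, reproduced in the paper's Section 4) avoids complex analysis entirely. The combinatorial input is the inequality $\log A(n)\leq C\sum_{k=1}^{\omega(n)}2^{k}\log p_{k}$ for squarefree $n$, where $p_{1}>p_{2}>\cdots$ are the prime factors of $n$ in decreasing order. One then shows (Lemmas 3.1--3.3, using the Halberstam--Richert mean-value theorem applied to $f(n)=b^{\omega([t,x],n)}$) that for any $2<\gamma<e$ and almost all $n\leq x$, one has $\log p_{k}\leq\gamma^{-k}\log x$ for all $k\geq k_{0}$. Since $2/\gamma<1$, the tail of the sum $\sum 2^{k}\log p_{k}$ is then $O(\log n)$ and the head is a bounded number of terms each $\leq 2^{k_0}\log n$, giving $\log A(n)=O(\log n)$ off a small exceptional set. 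You would need to replace your analytic crux with this (or some) combinatorial bound on $\log A(n)$ before the density argument has anything to act on.
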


Maier's upper bound has been shown to be best possible \cite{maier93}. In this paper, we consider an upper bound for $B(n)$ that holds for most $n$.

\begin{theorem} Let $\psi(n)$ be a function defined for all positive integers such that $\psi(n) \rightarrow \infty$ as $n \rightarrow \infty$. Then $B(n) \leq n^{\tau(n) \psi(n)}$ for almost all $n$, i.e., for all $n$ except for a set with asymptotic density $0$.\end{theorem}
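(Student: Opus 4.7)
The plan is to observe first that every divisor $f(x) \in \Z[x]$ of $x^n-1$ is, up to sign, a product $\prod_{d \in S}\Phi_d(x)$ over some subset $S$ of the divisors of $n$, by unique factorization applied to the identity $x^n-1 = \prod_{d \mid n}\Phi_d$. Iterating the elementary inequality $H(gh) \leq (\deg g + 1) H(g) H(h)$ and using $\deg f \leq n$ together with $A(d) \geq 1$, I would get
\[
B(n) \;\leq\; (n+1)^{\tau(n)-1}\prod_{d \mid n}A(d).
\]
Because $\log\bigl((n+1)^{\tau(n)}\bigr) = o\bigl(\tau(n)\psi(n)\log n\bigr)$ as $\psi(n)\to\infty$, the theorem reduces to showing that for almost all $n$,
\[
\sum_{d \mid n}\log A(d) \;\leq\; \tfrac12\tau(n)\psi(n)\log n.
\]

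The natural strategy is to control $A(d)$ divisor-by-divisor via Maier's theorem. I would choose an auxiliary function $\psi_0$ with $\psi_0(n) \to \infty$ and $\psi_0 \leq \psi/3$ (after passing to $\min_{m\geq n}\psi_0(m)$ we may assume $\psi_0$ is nondecreasing), and apply Maier's theorem to $\psi_0$ to produce an exceptional set $\mathcal{E}$ of asymptotic density zero outside of which $A(d)\leq d^{\psi_0(d)}$. For divisors of $n$ avoiding $\mathcal E$, this gives
\[
\sum_{\substack{d\mid n\\ d\notin\mathcal E}}\log A(d) \;\leq\; \psi_0(n)\!\!\sum_{d\mid n}\log d \;=\; \tfrac12\tau(n)\psi_0(n)\log n \;\leq\; \tfrac16\tau(n)\psi(n)\log n,
\]
which is well inside the budget. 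What remains is $\sum_{d\mid n,\,d\in\mathcal E}\log A(d)$, for which the only available pointwise bound is the trivial estimate $\log A(d) \leq d\log 2$.

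The main obstacle is to handle these bad divisors. My plan is to fix a threshold $T = T(x) \to \infty$ and split further. Divisors $d \mid n$ with $d\leq T$ contribute at most $T\tau(n)\log 2$ to the log-sum, which is within $\tfrac16\tau(n)\psi(n)\log n$ for almost all $n\leq x$ provided $T$ grows slower than $\psi(n)\log n$ for typical $n$. For potentially dangerous divisors $d \mid n$ with $d > T$ and $d \in \mathcal E$, I would argue that for almost all $n \leq x$ there are simply none, by noting that the number of $n \leq x$ admitting such a divisor is at most
\[
\sum_{\substack{T < d \leq x\\ d\in\mathcal E}}\Bigl\lfloor \tfrac{x}{d}\Bigr\rfloor \;\leq\; x\!\!\sum_{\substack{d > T\\ d \in \mathcal E}}\frac{1}{d},
\]
and using partial summation to bound the tail sum by an integral involving the density of $\mathcal E$ at scale $\geq T$. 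The delicate point is precisely here: knowing only $|\mathcal E \cap [1,y]|=o(y)$ does not automatically force $\sum_{d\in\mathcal E,\,d>T}1/d$ to be small as $T\to\infty$. One must therefore extract a quantitative density rate for $\mathcal E$ from the proof of Maier's theorem (or equivalently, choose $\psi_0$ to grow sufficiently slowly that this rate is explicit), and then balance $T$ against the rate of $\psi_0$ so that the small-divisor trivial contribution and the large-divisor tail are simultaneously controlled. Once that numerology goes through, combining the three pieces yields $B(n)\leq n^{\tau(n)\psi(n)}$ for all $n$ outside a set of asymptotic density zero.
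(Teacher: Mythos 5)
Your reduction of $B(n)$ to $\prod_{d\mid n}A(d)$ via the $H(gh)\leq(\deg g+1)H(g)H(h)$ inequality is sound and matches the paper's first step. The gap you flag at the end, however, is genuine and is not fixable along the lines you sketch. The density of the exceptional set $\mathcal E$ in Maier's theorem is tied inversely to the growth of $\psi_0$: making $\psi_0$ grow more slowly makes $\mathcal E$ \emph{larger}, not more tractable. Concretely, the proof of Maier's theorem shows that the set of $n\leq x$ with $A(n)>n^{C_3}$ has density roughly $\epsilon$ where $C_3\asymp 2^{k_0(\epsilon)}$ and $k_0(\epsilon)\asymp -\log\epsilon$; unwinding this gives density decaying only like a power of $1/\psi_0$. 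Since the theorem must hold for arbitrarily slowly growing $\psi$, you are forced to take $\psi_0\leq\psi/3$ equally slow, and then $\sum_{d\in\mathcal E}1/d$ diverges --- exactly the divergence that breaks your tail estimate for large bad divisors. You cannot escape this by choosing $\psi_0$ independent of $\psi$ either, since you need $\psi_0\leq\psi/3$ for the good-divisor contribution to stay within budget.

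The paper sidesteps this entirely by \emph{not} treating Maier's theorem as a black box. Instead it extracts the pointwise estimate at the heart of Maier's proof, namely $\log A(m)\leq C\sum_{k=1}^{\omega(m)}2^k\log p_k(m)$ for squarefree $m$ (extended to all $m$ via $A(m)=A(\mathrm{rad}(m))$), and then observes the crucial monotonicity: if $d\mid n$ then the $k$-th largest prime of $d$ is at most the $k$-th largest prime of $n$, so $\log A(d)\leq C\sum_{k=1}^{\omega(n)}2^k\log p_k(n)$ for \emph{every} divisor $d$ of $n$ simultaneously. Thus $A_0(n)=\max_{d\mid n}A(d)$ obeys the very same bound, in terms of the primes of $n$ alone, and one only needs to control $\sum_k 2^k\log p_k(n)$ for almost all $n$ --- which is done once via the lemmas on the decay of $\log p_k$, with no need to worry about which divisors of $n$ are "exceptional." This structural pass-through is the idea your proposal is missing, and it is what makes the problem tractable for arbitrary $\psi\to\infty$.
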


It is not yet known whether this upper bound for $B(n)$ is best possible.

\section{Proof strategy for Theorem 1.2}

Since $x^n - 1 = \prod_{d \mid n} \Phi_d(x)$, then $B(n) = H \left(\prod_{d \in \mathcal{D}} \Phi_d(x) \right)$, where $\mathcal{D}$ is a subset of divisors of $n$ for which $\prod_{d \in \mathcal{D}} \Phi_d(x)$ has maximal height over all products of distinct cyclotomic polynomials dividing $x^n - 1$.

In \cite{pr07}, Pomerance and Ryan show that if $f_1, ..., f_k \in \Z[x]$ with deg $f_1 \leq \cdots \leq$ deg $f_k$ then\\ 
$H(f_1 ... f_k) \leq \prod_{i=1}^{k-1} (1 + $deg$ f_i) \prod_{i=1}^{k} H(f_i)$. Thus, when $n > 1$,
\begin{equation} \label{key inequality} B(n) = H\left(\prod_{d \in \mathcal{D}} \Phi_d(x) \right) \leq \prod_{d \in \mathcal{D}} (1 + \varphi(d)) \prod_{d \in \mathcal{D}} A(d) \leq n^{\# \mathcal{D}} \prod_{d \in \mathcal{D}} A(d) \leq n^{\tau(n)} \prod_{d \mid n} A(d). \end{equation}

Let $A_0(n) := \displaystyle\max_{d \mid n} A(d)$. Then from (2.1), $B(n) \leq n^{\tau(n)} A_0(n)^{\tau(n)}$, since $A(d) \leq A_0(n)$ for each $d \mid n$. So, if we show that $A_0(n) \leq n^{\psi(n)}$ for almost all $n$, we will have 
\begin{equation} \label{second key inequality} B(n) \leq n^{\tau(n)}A_0(n)^{\tau(n)} \leq n^{\tau(n)} \cdot n^{\tau(n) \psi(n)} = n^{\tau(n)(1 + \psi(n))}\end{equation} for almost all $n$. Since $\psi(n)$ is any function that goes to infinity as $n$ approaches infinity, we will have proved the theorem. 

Thus, we have reduced the proof of Theorem 1.2 to the following proposition, which shall be proven in section 4.\\

\begin{proposition} \label{main prop} We have $A_0(n) \leq n^{\psi(n)}$ for almost all $n$.\end{proposition}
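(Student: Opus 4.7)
My plan is to reduce Proposition~\ref{main prop} to Maier's theorem by controlling each divisor of $n$ separately. Without loss of generality I may assume $\psi$ is nondecreasing and tends to infinity arbitrarily slowly, since the conclusion is strictly stronger for slower $\psi$. The key observation is that for any $d \mid n$, monotonicity of $\psi$ gives $d^{\psi(d)} \leq n^{\psi(n)}$; thus, if every $d \mid n$ avoids Maier's exceptional set $E := \{m : A(m) > m^{\psi(m)}\}$, then $A(d) \leq d^{\psi(d)} \leq n^{\psi(n)}$ for each $d$, and so $A_0(n) \leq n^{\psi(n)}$. It therefore suffices to show that the set of $n \leq x$ possessing some divisor $d \in E$ with $A(d) > n^{\psi(n)}$ is $o(x)$.

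The apparent difficulty is that density zero of $E$, which is all Maier's theorem gives directly, does not imply density zero for the set of $n$ having some divisor in $E$: for example, the primes have density zero but every $n>1$ has a prime divisor. To get around this, I would split divisors of $n$ by $\omega(d)$ and invoke the Bateman-Pomerance-Vaughan bound $A(d) \leq d^{2^{\omega(d)-1}/(\omega(d)-1)}$ in two regimes. For divisors with $\omega(d) \leq K(n)$, where $K(n)$ is chosen to grow slowly with $2^{K(n)-1}/(K(n)-1) \leq \psi(n)$ (possible since $\psi \to \infty$), BPV yields $A(d) \leq d^{\psi(n)} \leq n^{\psi(n)}$ unconditionally, bypassing Maier entirely. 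For divisors with $\omega(d) > K(n)$, the trivial lower bound $d \geq 2^{\omega(d)} > 2^{K(n)}$ forces $d$ to grow at least exponentially in $K(n)$, so these potentially troublesome divisors are sparse.

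For this second regime, I would estimate the count of bad $n \leq x$ by a union bound of the form
\[
\sum_{\substack{d \in E \\ \omega(d) > K(n) \\ d \leq x}} \frac{x}{d},
\]
using $d > 2^{K(n)}$ to restrict the range and applying a dyadic decomposition. This forces the problem onto a quantitative form of Maier's theorem: the qualitative density-zero statement alone is not enough to make the reciprocal sum $o(1)$, and one needs an explicit decay rate $|E \cap [1,t]|/t \leq \eta(t)$ with $\eta$ decaying faster than $1/\log t$ so that restricting to $d > 2^{K(n)}$ kills the sum. Extracting such a quantitative strengthening from the internal workings of Maier's proof is the main obstacle; once it is in hand, a careful balance between the growth of $K(n)$, the normal order $\omega(n) \leq 2\log\log n$ on a density-one set, and Maier's rate completes the argument.
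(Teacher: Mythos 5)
Your proposal honestly flags its own main gap (the need for a quantitative Maier bound), but the obstacle is more serious than ``extract a rate from Maier's proof and plug it in.'' The union bound $\#\{n\le x:\exists\,d\mid n,\ d\in E_\psi\}\le\sum_{d\in E_\psi,\,d\le x}x/d$ requires $\sum_{d\in E_\psi}1/d<\infty$, and the exceptional set $E_\psi=\{m:A(m)>m^{\psi(m)}\}$ \emph{grows} as $\psi$ slows. Making the paper's argument explicit gives a rate of the shape $\#\{m\le t:A(m)>m^{T}\}\ll t\,T^{-c}$ for some fixed $c=c(\gamma)>0$ (this is what the formula $C_3=(\varepsilon(1-e^{-c_0(\gamma)})/C_2)^{-c_0(\gamma)}+2L$ encodes after inverting), so for nondecreasing $\psi$ one gets $\#\bigl(E_\psi\cap(t/2,t]\bigr)\ll t\,\psi(t/2)^{-c}$ and hence $\sum_{d\in E_\psi,\,d\le x}1/d\ll\sum_{j}\psi(2^{j})^{-c}$. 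That sum converges only when $\psi(n)$ grows at least like a power of $\log n$; for a genuinely slow $\psi$ (say $\psi(n)=\log\log\log n$) the union bound over divisors is simply too lossy, and your ``without loss of generality $\psi$ is arbitrarily slow'' reduction pushes in exactly the wrong direction, since slower $\psi$ enlarges $E_\psi$. So the approach as sketched does not close for general $\psi\to\infty$, even granting a quantitative Maier theorem.

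The paper avoids any union bound over divisors. It starts from Maier's structural inequality $\log A(m)\le C\sum_{k\ge1}2^{k}\log p_k(m)$ (valid for all $m$ via $A(m)=A(\mathrm{rad}(m))$), and observes that if $d\mid n$ then $p_{k,d}\le p_k(n)$ for every $k$, whence
\[
\log A(d)\le C\sum_{k=1}^{\omega(d)}2^{k}\log p_{k,d}\le C\sum_{k=1}^{\omega(n)}2^{k}\log p_k(n)
\]
\emph{uniformly} over all $d\mid n$. Thus $\log A_0(n)$ is controlled by a single quantity depending only on $n$'s own prime factorization, and the whole problem reduces to Lemma~\ref{corollary}: outside a set of at most $2\varepsilon x$ integers $n\le x$ one has $\log p_k(n)\le\gamma^{-k}\log x$ for every $k\ge k_0(\varepsilon)$, which yields $A_0(n)\le n^{C_3(\varepsilon)}$ off that small set; letting $\varepsilon\to0$ then handles every $\psi\to\infty$ simultaneously, with no rate on the exceptional set required. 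The idea you are missing is precisely this monotonicity $p_{k,d}\le p_k(n)$, which collapses the maximum over divisors into a statement about $n$ alone and makes the per-divisor counting unnecessary.
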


\section{Key Lemmas}





Let $\omega(n)$ be defined as in section 1. Write the prime factorization of $n$ as $p_1^{e_1} \cdots p_{\omega(n)}^{e_{\omega(n)}}$, where $p_1 > p_2 > \cdots > p_{\omega(n)}$, $e_k \geq 1$ for $1 \leq k \leq \omega(n)$. Thus, we have functions $p_k = p_k(n)$ defined when $k \leq \omega(n)$. If $k > \omega(n),$ we let $p_k(n) = 1$.

To prove our proposition, we will show that for most integers, the size of the prime factors $p_k$ decreases rapidly on a logarithmic scale as $k$ increases.

\begin{lemma} \label{small lemma} Let $2 < \gamma < e$. The set $\{n : \omega(n) \geq \frac{\log \log n}{ \log \gamma} \}$ has density $0$. \end{lemma}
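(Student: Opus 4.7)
The plan is to derive this lemma from the Hardy--Ramanujan theorem, which asserts that $\omega(n)$ has normal order $\log \log n$. Quantitatively, Tur\'an's refinement gives
\begin{equation*}
\sum_{n \leq x} \bigl(\omega(n) - \log \log n\bigr)^2 \ll x \log \log x,
\end{equation*}
so by Chebyshev's inequality, for any fixed $\epsilon > 0$ the count of $n \leq x$ with $\omega(n) > (1+\epsilon)\log\log n$ is $O(x/\log \log x) = o(x)$.

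To match the lemma's hypothesis, I would set $c := 1/\log \gamma$. The condition $2 < \gamma < e$ translates to $0 < \log \gamma < 1$, equivalently $c > 1$. Any $n$ in the set considered by the lemma therefore satisfies $\omega(n) \geq c \log \log n = \bigl(1 + (c-1)\bigr)\log\log n$, with $c - 1 > 0$ a fixed positive constant. Applying the Hardy--Ramanujan--Tur\'an bound with $\epsilon = c - 1$ immediately yields density zero.

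The only real content of the argument is the observation that the hypothesis $\gamma < e$ is precisely what forces $c > 1$, i.e., that the threshold in the lemma lies \emph{above} the normal order of $\omega$. There is no significant obstacle beyond citing Hardy--Ramanujan; the constraint $\gamma > 2$ is not used at this stage and presumably appears only later, when this lemma is combined with the estimates for the sizes of the prime factors $p_k(n)$ needed in the proof of Proposition \ref{main prop}.
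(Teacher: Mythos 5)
Your argument is correct and follows essentially the same path as the paper: both you and the paper observe that $2 < \gamma < e$ forces $1/\log\gamma > 1$, set $\varepsilon = 1/\log\gamma - 1 > 0$, and invoke the normal order of $\omega(n)$ (you via Tur\'an/Chebyshev, the paper by direct citation) to conclude density zero. Your closing remark that only $\gamma < e$ is used here, and that $\gamma > 2$ is needed elsewhere, is also accurate.
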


\begin{proof} Since $2 < \gamma < e$ then log $\gamma \in (0,1)$, so $1 < \frac{1}{\log \gamma}$. Now, the normal order of $\omega(n)$ is $\log \log n$ \cite[p.111]{pollack09}, so for each $\varepsilon > 0$, $\omega(n) < (1 + \varepsilon) \log \log n$ must hold, except for a set of $n$ with asymptotic density 0. In particular, since $\varepsilon = \frac{1}{\log \gamma} - 1 > 0$, then $\omega(n) < \frac{1}{\log \gamma} \log \log n$ for almost all $n$. \end{proof}

Let $\mu(n)$ be the M\"{o}bius function. From \cite[Lemma 5]{maier96}, we know that if $2 < \gamma < e$ then there is a constant $c(\gamma) > 0$ such that for each natural number $k < $log log $ x/$log $ \gamma,$
\[\#\{n \leq x : \mu(n) \neq 0, \ \log p_k > \gamma^{-k} \log x\} \ll xe^{-c(\gamma) k}.\]


The following lemma says that we can remove the restriction that $\mu(n) \neq 0$, i.e., we do not need to assume that $n$ is square-free.\\

\begin{lemma} \label{main lemma} Let $2 < \gamma < e$. Let $x > 1$. There are positive constants $c_0(\gamma), C_2$ such that for each natural number $k < \log \log x/\log \gamma$,
\[\#\{n \leq x : \log p_k > \gamma^{-k} \log x\} \leq C_2 xe^{-c_0(\gamma) k}.\] \end{lemma}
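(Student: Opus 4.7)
The plan is to reduce to Maier's squarefree lemma by peeling off the squareful part of $n$. Every positive integer has a unique decomposition $n = ab$ with $a$ squarefree, $b$ squareful ($p \mid b \Rightarrow p^2 \mid b$, with $b=1$ allowed), and $\gcd(a,b)=1$. Since the prime divisors of $a$ and $b$ are disjoint, setting $T := x^{\gamma^{-k}}$ and letting $\omega_{>T}(\cdot)$ denote the number of distinct prime factors exceeding $T$, the condition $\log p_k(n) > \gamma^{-k}\log x$ becomes $\omega_{>T}(a) + j(b) \geq k$, where $j(b) := \omega_{>T}(b)$. Hence
\[
\#\{n \leq x : \log p_k(n) > \gamma^{-k}\log x\} = \sum_{b \text{ sqful}} \#\{a \leq x/b : a \text{ sqfree},\ \gcd(a,b)=1,\ \omega_{>T}(a) \geq k - j(b)\}.
\]
The key auxiliary estimate is $\#\{b \leq y : b \text{ sqful}\} \ll y^{1/2}$, which by partial summation yields $\sum_{b > B,\,\text{sqful}} 1/b \ll B^{-1/2}$ and $\sum_{b\text{ sqful}} 1/b = O(1)$.

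I would split on the size of $b$. For $b > \log x$, the trivial bound $\#\{a \leq x/b\} \leq x/b$ gives total contribution $\ll x/\sqrt{\log x}$, which is $\ll xe^{-c_0 k}$ for any $c_0 < (\log\gamma)/2$ throughout the allowed range of $k$. For $b \leq \log x$ with $j(b) = 0$, drop the coprimality condition (an upper bound) and apply the cited squarefree lemma to $a \leq x/b$ with parameter $k$: the hypothesis $\log T \geq \gamma^{-k}\log(x/b)$ holds automatically since $b \geq 1$, and $k < \log\log(x/b)/\log\gamma$ holds outside $O(1)$ boundary values of $k$ (the gap being $o(1)$). This produces $\ll (x/b)e^{-c(\gamma)k}$ per $b$, summing to $\ll xe^{-c(\gamma)k}$.

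The remaining case is $b \leq \log x$ with $j(b) \geq 1$. Here $b$ is divisible by $p^2$ for some prime $p > T$, so $b \geq T^2$; combined with $b \leq \log x$ this forces $T \leq \sqrt{\log x}$, so this case has content only when $k$ lies within $O(\log\log\log x/\log\gamma)$ of the upper boundary. In this narrow regime the constraints $\omega_{>T}(a) \geq k - j(b)$ and $j(b) \geq 1$ together imply $\Omega_{>T}(n) \geq \omega_{>T}(a) + 2j(b) \geq k+1$, and I would control $\#\{n \leq x : \Omega_{>T}(n) \geq k+1\}$ by a Rankin-type argument. Taking $z := 1/\log\gamma > 1$, a Shiu/Halász-style bound on the multiplicative function $n \mapsto z^{\Omega_{>T}(n)}$ gives $\sum_{n \leq x} z^{\Omega_{>T}(n)} \ll x\gamma^{k(z-1)}$, whence $\#\{n \leq x : \Omega_{>T}(n) \geq k+1\} \leq z^{-(k+1)}\sum_n z^{\Omega_{>T}(n)} \ll x(e\log\gamma/\gamma)^k = xe^{-c'k}$ with $c' = \log(\gamma/(e\log\gamma)) > 0$; the positivity $c' > 0$ is exactly where the hypothesis $\gamma < e$ is used. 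Setting $c_0(\gamma) := \min\{c(\gamma),\ c',\ (\log\gamma)/2\}$ and choosing $C_2$ large enough to absorb the implicit constants and the finitely many boundary values of $k$ then completes the proof. The main obstacle is this third case, since Maier's squarefree lemma does not apply there and a separate multiplicative-function estimate is needed.
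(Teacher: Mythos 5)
Your third case contains the real engine, and it is essentially the same mechanism the paper uses. The paper applies the Halberstam--Richert theorem directly to the multiplicative function $f(n)=b^{\omega([t,x],n)}$ with $t=x^{\gamma^{-k}}$, checks both hypotheses, gets $\sum_{n\le x}f(n)\ll x\bigl(\log x/\log t\bigr)^{b-1}=x\gamma^{k(b-1)}$, and finishes with Markov's inequality after choosing $b>1$, $\varepsilon>0$ so that $\tfrac{(1+\varepsilon)(b-1)}{\log b}\log\gamma\le 1$ --- a choice that is possible exactly because $\gamma<e$, since $(b-1)/\log b\to 1$ as $b\to 1^{+}$. Your computation $\gamma^{\,z-1}/z=e\log\gamma/\gamma<1$ at $z=1/\log\gamma$ is the same use of $\gamma<e$, with $z^{\Omega_{>T}(n)}$ playing the role of $b^{\omega([t,x],n)}$.

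The surrounding scaffolding, however, is both unnecessary and not fully correct. It is unnecessary because your Rankin estimate already proves the whole lemma with no factorization of $n$: since $\omega_{>T}(n)\le\Omega_{>T}(n)$ always, one has
\[
\#\{n\le x:\omega_{>T}(n)\ge k\}\ \le\ z^{-k}\sum_{n\le x}z^{\Omega_{>T}(n)}\ \ll\ x\Bigl(\frac{e\log\gamma}{\gamma}\Bigr)^{k},
\]
which is the desired bound outright (one must check $z=1/\log\gamma<2$ so that the second Halberstam--Richert condition $\sum_p\sum_{\nu\ge 2}z^{\nu}\nu\log p/p^{\nu}<\infty$ holds; this is fine since $\gamma>2$). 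It is incorrect in case 2: when $\gamma^{k}>\log x-\log\log x$ (i.e.\ $k$ within $o(1)$ of the upper boundary), Maier's range hypothesis $k<\log\log(x/b)/\log\gamma$ fails for every squareful $b$ in the interval $\bigl(xe^{-\gamma^{k}},\log x\bigr]$, which can be a large set, and the target bound at such $k$ is of order $x(\log x)^{-c_0/\log\gamma}=o(x)$; a trivial bound plus ``choose $C_2$ large'' cannot reach it, and you would be forced back onto the Rankin estimate there anyway. So the squarefree--squareful decomposition and the appeal to Maier's cited lemma buy you nothing; the paper's version, and the cleaned-up version of your case 3, go straight to the point.
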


\begin{proof} We adopt the same strategy as in \cite{maier96}. The following is a classical result, due to Halberstam and Richert \cite[Thm 01]{ht88}: Let $f$ be a non-negative multiplicative function such that for some numbers $A$ and $B$ and for all numbers $y \geq 0$, we have
\begin{equation} 
\label{halberstam} \displaystyle\sum_{p \leq y} f(p) \log p \leq Ay, \ \ \ \ \ \ \ \ \displaystyle\sum_p \displaystyle\sum_{\nu \geq 2} \frac{f(p^\nu)}{p^{\nu}} \log p^{\nu} \leq B, \end{equation} where $p$ runs over primes and $\nu$ runs over integers. Then, for all numbers $x > 1$, \begin{equation} \label{halberstam2} \displaystyle\sum_{n \leq x} f(n) \leq (A + B + 1) \frac{x}{\log x} \displaystyle\sum_{n \leq x} \frac{f(n)}{n}. 
\end{equation}

We apply this theorem with $f(n) = b^{\omega([t,x],n)}$, where $w([t,x],n)$ is the number of distinct prime factors of $n$ in the interval $[t,x]$, with $t = x^{\gamma^{-k}}$, $b>1$ ($b$ will be specified later). In order to apply the theorem, we need to check that both conditions in \eqref{halberstam} are satisfied.

As usual, let $\theta(y) = \sum_{p \leq y} \log p$. Since $\theta(y) \leq 2y \log 2 < 2y$ \cite[p.108]{pollack09} then $$\displaystyle\sum_{p \leq y} f(p) \log p \leq 2by$$ for all $y$. Thus, the first condition is satisfied, with $A = 2b$.

Next, we show that the second condition is satisfied for a suitable number $B$, namely that the double sum converges. Consider the sum $\sum_p \sum_\nu \frac{\log p^{\nu}}{p^\nu} b^{\omega([t,y], p^\nu)}$, where $p$ runs over primes, $\nu \geq 2$. Since $\omega$ counts only distinct prime factors, we have $\omega([t,y], p^\nu) \leq 1$. So,
$$\sum_p \sum_{\nu \geq 2} \frac{\log p^{\nu}}{p^\nu} b^{\omega([t,y], p^\nu)} \leq b \sum_p \left(\frac{2 \log p}{p^2} + \frac{3 \log p}{p^3} + \cdots \right) = b \sum_p \left(\frac{2}{p^2} + \frac{3}{p^3} + \cdots \right) \log p.$$ 
It is easy to see that \begin{equation} \label{midstep} \sum_p \left(\frac{2}{p^2} + \frac{3}{p^3} + \cdots \right) \log p = 2 \sum_p \frac{\log p}{p(p-1)}\end{equation} holds, and that the sum in \eqref{midstep} is less than $4$. Thus, the second condition is satisfied, with $B = 4b$.

Therefore, by \eqref{halberstam2}, we have 
\begin{equation} 
\label{halberstam result} \displaystyle\sum_{n \leq x} b^{\omega([t,x], n)} \leq (2b + 4b + 1) \frac{x}{\log x} \displaystyle\sum_{n \leq x} \frac{f(n)}{n} \leq 7b \frac{x}{\log x} \displaystyle\sum_{n \leq x} \frac{f(n)}{n}. \end{equation}

Now, $\sum_{n \leq x} \frac{f(n)}{n} \leq \prod_{p \leq x} \left(1 + \frac{f(p)}{p} + \frac{f(p^2)}{p^2} + \cdots \right)$, since $f$ is a non-negative multiplicative function (certainly all prime factors of each $n \leq x$ are in this product). Taking the $\log$ of both sides, 
we have \begin{eqnarray*} \log \left(\displaystyle\sum_{n \leq x} \frac{f(n)}{n} \right) & \leq & \log \displaystyle\prod_{p \leq x} \left(1 + \frac{f(p)}{p} + \frac{f(p^2)}{p^2} + \cdots \right) \\ & = & \log \displaystyle\prod_{p \leq x} \left(1 + f(p) \left(\frac{1}{p} + \frac{1}{p^2} + \cdots \right) \right) \\ & = & \log \displaystyle\prod_{p \leq x} \left(1 + \frac{f(p)}{p-1} \right) = \displaystyle\sum_{p \leq x} \log \left(1 + \frac{f(p)}{p-1} \right). \end{eqnarray*} 
Thus, $$ \log \left(\displaystyle\sum_{n \leq x} \frac{f(n)}{n} \right) \leq \displaystyle\sum_{p \leq x} \frac{f(p)}{p-1} = \displaystyle\sum_{p < t} \frac{1}{p-1} + \displaystyle\sum_{t \leq p \leq x} \frac{b}{p-1},$$
since $f(p) = 1$ when $p < t$ and $f(p) = b$ when $t \leq p \leq x$. By Mertens' first theorem \cite[p.92]{pollack09},
$$ \displaystyle\sum_{p < t} \frac{1}{p-1} + \displaystyle\sum_{t \leq p \leq x} \frac{b}{p-1} = \log \log x + (b-1)(\log \log x - \log \log t) + O(b).$$ Let $\alpha$ be the constant associated with $O(b)$. After undoing the logarithms, we are left with 
\begin{equation}
\label{no logs simplification} \displaystyle\sum_{n \leq x} \frac{f(n)}{n} \leq C_1 \log x\left(\frac{\log x}{\log t} \right)^{b-1}, 
\end{equation} where $C_1 = e^{\alpha b}$. 
Inserting \eqref{no logs simplification} into \eqref{halberstam result}, we have 
\begin{equation} 
\label{new halberstam result} \displaystyle\sum_{n \leq x} b^{\omega([t,x], n)} \leq 7bC_1 x \left(\frac{\log x}{\log t} \right)^{b-1}. 
\end{equation}

Let $C_2 = 7bC_1$. Let $$N = \#\{n \leq x : \omega([t,x], n) > \frac{(1 + \varepsilon)(b-1)}{\log b} (\log \log x - \log \log t)\}.$$ Using \eqref{new halberstam result}, we have $$N b^{\frac{(1 + \varepsilon)(b-1)}{\log b}(\log \log x - \log \log t)} \leq \displaystyle\sum_{n \leq x} b^{\omega([t,x], n)} \leq C_2 x \left(\frac{\log x}{\log t} \right)^{b-1}.$$ But $$b^{\frac{(1+\varepsilon)(b-1)}{\log b}(\log \log x - \log \log t)} = e^{(1 + \varepsilon)(b-1)(\log \log x - \log \log t)} = \left(\frac{\log x}{\log t} \right)^{(1 + \varepsilon)(b - 1)}.$$ So 
$$N \leq \frac{C_2 x (\frac{\log x}{\log t})^{b-1}}{(\frac{\log x}{\log t})^{(1 + \varepsilon)(b-1)}} = C_2 x\left(\frac{\log x}{\log t}\right)^{-\varepsilon(b - 1)}.$$ In other words, 
\begin{equation} 
\label{omega bound} \omega([t,x], n) \leq \frac{(1 + \varepsilon)(b - 1)}{\log b} (\log \log x - \log \log t) \end{equation} for all $n \leq x$ except for a set of cardinality at most $C_2 x (\frac{\log x}{\log t})^{- \varepsilon (b - 1)}$. 

Now, fix $\varepsilon > 0$, $b > 1$ such that $\frac{(1+\varepsilon)(b-1)}{\log b} \log \gamma \leq 1$. Let $k < \log \log x/\log \gamma$. Recall that $t = x^{\gamma^{-k}}$. Then, if $\log p_k > \gamma^{-k} \log x$, we have 
\begin{equation} 
\label{last one} \omega([t,x], n) \geq k \geq \frac{(1+\varepsilon)(b-1)}{\log b} k \log \gamma. \end{equation} 
Since $k \log \gamma = \log \log x  - \log \log t,$ we have $\omega([t,x], n) \geq \frac{(1 + \varepsilon)(b - 1)}{\log b} (\log \log x - \log \log t)$. But this contradicts \eqref{omega bound} except for a set of cardinality at most $C_2 x(\frac{\log x}{\log t})^{-\varepsilon(b - 1)}$. Thus, the set of $n \leq x$ with $\log p_k > \gamma^{-k} \log x$ has a cardinality of at most $C_2 x (\frac{\log x}{\log t})^{-\varepsilon(b-1)}.$ Since $t = x^{\gamma^{-k}}$, we have $$\# \{n \leq x : \log p_k > \gamma^{-k} \log x\} \leq C_2 x e^{-k \varepsilon (b-1) \log(\gamma)}.$$ Taking $c_0(\gamma) = \varepsilon (b-1) \log(\gamma)$, we obtain the desired result.\end{proof}

The following lemma says that, except for a sparse set of integers $n$, $\log p_k$ is small when $k$ is sufficiently large.

\begin{lemma} \label{corollary} Let $2 < \gamma < e$. Let $\varepsilon > 0$ be arbitrary and let $k_0 = \frac{\log(\varepsilon(1 - e^{-c_0(\gamma)})/C_2)}{-c_0(\gamma)}$, where $c_0(\gamma)$ and $C_2$ are as in Lemma \ref{main lemma}. Then, for $x$ sufficiently large, the set $\{n \leq x : \log p_k > \gamma^{-k} \log x$ for some $ k \geq k_0 \}$ has cardinality at most $2 \varepsilon x$. \end{lemma}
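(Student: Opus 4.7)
The plan is to combine Lemma \ref{main lemma} with Lemma \ref{small lemma} via a union bound over $k \geq k_0$, splitting according to whether $k$ is in the range where Lemma \ref{main lemma} applies. Set $K := \log\log x/\log\gamma$.

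For the range $k_0 \leq k < K$, I would apply Lemma \ref{main lemma} and then use a union bound, summing a geometric series:
\[
\sum_{k_0 \leq k < K} \#\{n \leq x : \log p_k > \gamma^{-k}\log x\} \leq \sum_{k \geq k_0} C_2 x e^{-c_0(\gamma) k} = \frac{C_2 x e^{-c_0(\gamma)k_0}}{1 - e^{-c_0(\gamma)}}.
\]
The definition of $k_0$ is engineered so that this expression equals $\varepsilon x$ (or, taking $\lceil k_0 \rceil$, is at most $\varepsilon x$); indeed, solving $C_2 e^{-c_0(\gamma) k_0}/(1-e^{-c_0(\gamma)}) = \varepsilon$ for $k_0$ yields exactly the prescribed value.

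For the range $k \geq K$, Lemma \ref{main lemma} does not apply, but here I would argue that the exceptional set is automatically empty except for an $o(x)$ set. Specifically, if $\omega(n) < K$, then for every $k \geq K$ we have $k > \omega(n)$, hence $p_k(n) = 1$ by convention, and so $\log p_k = 0 \not> \gamma^{-k}\log x$. Therefore the only $n \leq x$ that can contribute to the range $k \geq K$ are those with $\omega(n) \geq K = \log\log x/\log\gamma \geq \log\log n/\log\gamma$, and by Lemma \ref{small lemma} the number of such $n$ is $o(x)$. In particular, for $x$ sufficiently large, this count is at most $\varepsilon x$.

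Adding the two contributions gives the bound $\varepsilon x + \varepsilon x = 2\varepsilon x$, as desired. The main obstacle is really just the bookkeeping of the two regimes: Lemma \ref{main lemma} is only stated for $k < \log\log x/\log\gamma$, so the argument that large $k$ contribute nothing (beyond an $o(x)$ exceptional set) via the convention $p_k(n) = 1$ for $k > \omega(n)$ must be handled explicitly. Everything else is a calibrated geometric sum whose tail was set up precisely by the choice of $k_0$.
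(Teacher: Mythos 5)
Your proposal is correct and takes essentially the same route as the paper: both split the range of $k$ at $K = \log\log x/\log\gamma$, apply Lemma~\ref{main lemma} and a geometric-series union bound for $k_0 \leq k \lesssim K$, and reduce the range $k \gtrsim K$ to the set $\{n : \omega(n) \gtrsim \log\log n/\log\gamma\}$, which Lemma~\ref{small lemma} shows is $o(x)$. Your spelling out of the convention $p_k(n)=1$ for $k > \omega(n)$ makes explicit a step the paper leaves implicit, but the argument is identical in substance.
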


\begin{proof} Fix $\varepsilon > 0$. Let $\mathcal{S} = \{n \leq x : \log p_k > \gamma^{-k} \log x \ \mathrm{for \ some} \ k \geq k_0\}$ and let $S_k =  \{n \leq x : \log p_k > \gamma^{-k} \log x \}$. By Lemma \ref{main lemma}, we have $$\# \ \mathcal{S} \leq \displaystyle\sum_{k = \lceil k_0 \rceil}^{\lfloor \frac{\log \log x}{\log \gamma} \rfloor} \# \ \mathcal{S}_k + \# \{n : \omega(n) > \frac{\log \log x}{\log \gamma} \} \leq \displaystyle\sum_{k = \lceil k_0 \rceil}^\infty C_2 x e^{-c_0(\gamma) k} + \varepsilon x$$ for sufficiently large $x$, since $\{n : \omega(n) > \frac{\log \log x}{\log \gamma} \}$ has density 0 by Lemma \ref{small lemma}. But the sum on the right is a convergent geometric series, so $$\# \ \mathcal{S} \leq \frac{C_2 x e^{-c_0(\gamma)k_0}}{1 - e^{-c_0(\gamma)}} + \varepsilon x.$$ Thus, using the definition of $k_0$, \[\#\{n \leq x : \log  p_k > \gamma^{-k}  \log  x \ \mathrm{for \ some} \ k \geq k_0 \} \leq 2 \varepsilon x.\] \end{proof}

\section{Proof of proposition 2.1}

\begin{proof} Maier shows in [2] that if $\psi(n)$ is any function defined on all positive integers $n$ such that $\psi(n) \rightarrow \infty$ as $n \rightarrow \infty$ then $A(n) \leq n^{\psi(n)}$ for almost all $n$. Key to this proof is the fact that 
\begin{equation} 
\label{maierkey} \log A(n) \leq C \displaystyle\sum_{k=1}^{\omega(n)} 2^k \log p_k 
\end{equation}
for all square-free integers $n$, where $C > 0$ is a constant and $p_k = p_k(n)$ is as above. 

We define the radical of $n$, denoted $\mathrm{rad}(n)$, to be the largest square-free divisor of $n$. Since $\Phi_n(x) = \Phi_{\mathrm{rad}(n)}(x^{n/\mathrm{rad}(n)})$, the coefficients of $\Phi_n(x)$ are the same as the coefficients of $\Phi_{\mathrm{rad}(n)}(x)$. Thus, $A(n) = A(\mathrm{rad}(n))$. As a result, we can use \eqref{maierkey} for any positive integer $n$, since $$\log A(n) = \log A(\mathrm{rad}(n)) \leq C\sum_{k=1}^{\omega(n)} 2^k \log p_k.$$ 

For each $d$ dividing $n$, let $d = p_{1,d}^{e_{1,d}} p_{2,d}^{e_{2,d}} \cdots p_{\omega(d), d}^{e_{\omega(d), d}}$, where $p_{1,d} > p_{2,d} > \cdots > p_{\omega(d), d}$ and $e_{k,d} \geq 1$ for $1 \leq k \leq \omega(d).$ Also, let $p_{k,d} = 1$ for $k > \omega(d)$. Since $d \mid n$ then the primes dividing $d$ also divide $n$. Thus, $p_{k, d} \leq p_k$ for all $k$, so $$\displaystyle\sum_{k=1}^{\omega(d)} 2^k \log p_{k,d} \leq \displaystyle\sum_{k=1}^{\omega(d)} 2^k \log p_k \leq \displaystyle\sum_{k = 1}^{\omega(n)} 2^k \log p_k.$$ 
Thus, log $A(d) \leq C \displaystyle\sum_{k=1}^{\omega(n)} 2^k $ log $ p_k$ holds for all $n$ and for all $d \mid n$. Since log $A_0(n) =  \log A(d) $ for some $d \mid n$ we then have $\log A_0(n) \leq C \displaystyle\sum_{k=1}^{\omega(n)} 2^k \log p_k.$

Let $\varepsilon > 0$ be arbitrary and let $k_0$ be as in Lemma \ref{corollary}. Combining the above inequality with Lemma \ref{corollary}, we have 
\begin{align} \log A_0(n) \leq C \displaystyle\sum_{k=1}^{\omega(n)} 2^k \log p_k = C \displaystyle\sum_{k \leq \lfloor k_0 \rfloor} 2^k \log p_k \ + C \displaystyle\sum_{k = \lfloor k_0 \rfloor +1}^{\omega(n)} 2^k \log p_k \\ \label{*} \leq C\displaystyle\sum_{k \leq \lfloor k_0 \rfloor} 2^k \log p_k  + C \displaystyle\sum_{k= \lfloor k_0 \rfloor +1}^{\omega(n)} (2/\gamma)^k \log x \end{align} for all $n \leq x$ except for a set with cardinality $\leq 2 \varepsilon x$. Since $2 < \gamma < e$ then $(2/\gamma) <1$. Hence, $\sum_{k= \lfloor k_0 \rfloor}^{\omega(n)} (2/\gamma)^k $ is part of a convergent geometric series, so it is bounded above by some positive constant $L$ that is independent of $n$. 

Now, if $\sqrt{x} \leq n \leq x$ then $2 \log n > \log x$, so 
$$\displaystyle\sum_{k = \lfloor k_0 \rfloor + 1}^{\omega(n)} (2/\gamma)^k \log x \leq 2 \log n \displaystyle\sum_{k = \lfloor k_0 \rfloor + 1}^{\omega(n)} (2/\gamma)^k = 2L \log n. $$ Then, if $n$ is such that \eqref{*} holds, \begin{align*} \log A_0(n) \leq C \displaystyle\sum_{k \leq \lfloor k_0 \rfloor} 2^k \log p_k \ + 2L \log n \\ \leq 2^{\lfloor k_0 \rfloor} C \displaystyle\sum_{k \leq \lfloor k_0 \rfloor} \log p_k \ + 2L  \log n\\ = 2^{\lfloor k_0 \rfloor} C\log (\displaystyle\prod_{k \leq \lfloor k_0 \rfloor} p_k) + 2L \log n \\ \leq \log (n^{2^{\lfloor k_0 \rfloor} C}) + \log (n^{2L}). \end{align*} Thus, $A_0(n) \leq n^{2^{\lfloor k_0 \rfloor}C} \cdot n^{2L}$. 
Then, we have $$A_0(n) \leq n^{2^\frac{\log (\varepsilon(1 - e^{-c_0(\gamma)})/C_2)}{-c_0(\gamma)}} \cdot n^{2L} \leq n^{e^\frac{\log (\varepsilon(1-e^{-c_0(\gamma)})/C_2)}{-c_0(\gamma)}} \cdot n^{2L} = n^{(\varepsilon(1-e^{-c_0(\gamma)})/C_2)^{-c_0(\gamma)}} \cdot n^{2L}.$$ As mentioned, this holds for all $n$ with $\sqrt{x} \leq n \leq x$ and for which \eqref{*} holds. Therefore, for any $\varepsilon > 0$ there is a constant $C_3 = (\frac{\varepsilon(1-e^{-c_0(\gamma)})}{C_2})^{-c_0(\gamma)} + 2L$ such that for all sufficiently large $x$, every $n \leq x$ satisfies $A_0(n) \leq n^{C_3}$, except for at most $2 \varepsilon x + \sqrt{x}$ of them. Since $\varepsilon > 0$ is arbitrary, this proves Proposition \ref{main prop}, which concludes the proof of our main theorem. \end{proof}

\textit{Acknowledgements.} I would like to thank my adviser, Carl Pomerance, for the careful guidance and encouragement that he provided while I was writing this paper. 

\providecommand{\bysame}{\leavevmode\hbox
to3em{\hrulefill}\thinspace}
\providecommand{\MR}{\relax\ifhmode\unskip\space\fi MR }
\providecommand{\MRhref}[2]{%
  \href{http://www.ams.org/mathscinet-getitem?mr=#1}{#2}
} \providecommand{\href}[2]{#2}

\end{document}